\newcommand{\CC}{\mathbb{C}}
\newcommand{\NN}{\mathbb{N}}
\newcommand{\ZZ}{\mathbb{Z}}
\newcommand{\Oc}{\mathcal{O}}
\newcommand{\nbf}{\mathbf{n}}
\newcommand{\xbf}{\mathbf{x}}
\newcommand{\bsa}{\boldsymbol{\alpha}}
\newcommand{\set}[1]{\left\{ #1 \right\}}
\newcommand{\setb}[1]{\left( #1 \right)}
\newcommand{\abs}[1]{\left| #1 \right|}
\newcommand{\norm}[1]{\left\lVert #1 \right\rVert}
\newtheorem{mymasterthm}{notForUse}
\theoremstyle{definition}
\newtheorem{myrem}[mymasterthm]{Remark}
\theoremstyle{plain}
\newtheorem{mylemma}[mymasterthm]{Lemma}
\newtheorem{mythm}[mymasterthm]{Theorem}
\title[Growth of multi-recurrences]{On the growth of multi-recurrences}
\subjclass[2020]{11B37, 11J87}
\keywords{Multi-recurrence, growth, $ S $-units}
\author[C. Fuchs]{Clemens Fuchs}
\address{Clemens Fuchs\newline
	\indent University of Salzburg\newline
	\indent Department of Mathematics\newline
	\indent Hellbrunnerstr. 34 \newline
	\indent A-5020 Salzburg, Austria}
\email{clemens.fuchs@plus.ac.at}
\author[S. Heintze]{Sebastian Heintze}
\address{Sebastian Heintze\newline
	\indent Graz University of Technology\newline
	\indent Institute of Analysis and Number Theory\newline
	\indent Steyrergasse 30/II \newline
	\indent A-8010 Graz, Austria}
\email{heintze@math.tugraz.at}
\thanks{Supported by Austrian Science Fund (FWF): I4406.}
\begin{document}
	
	\maketitle
	
	
	\begin{abstract}
		In this paper we provide a complete proof for a bound on the growth of multi-recurrences which are defined over a number field.
		The proven bound was already stated by van der Poorten and Schlickewei forty years ago.
	\end{abstract}
	
	\section{Introduction}
	
	A linear recurrence sequence is a sequence $ (G_n)_{n=0}^{\infty} $ given by a recursive formula of the shape $ G_{n+k} = c_{k-1} G_{n+k-1} + \cdots + c_0 G_n $ together with finitely many initial values.
	This definition makes sense over any field $ K $.
	It is well known that any linear recurrence sequence has an explicit representation of the form
	\begin{equation}
		\label{eq:lrs}
		G_n = b_1(n) \beta_1^n + \cdots + b_r(n) \beta_r ^n
	\end{equation}
	for polynomials $ b_1,\ldots,b_r $ over $ K(\beta_1,\ldots,\beta_r) $ and elements $ \beta_1,\ldots,\beta_r $ which are algebraic over $ K $, the so-called Binet formula.
	It is also well known (see \cite{fuchs-heintze-p3} for references and a proof) that if $ G_n $ takes values in a number field, then under natural and non-restrictive conditions, i.e. $ \beta_1,\ldots,\beta_r $ are algebraic integers, no ratio $ \beta_i / \beta_j $ for $ i \neq j $ is a root of unity and $ \max_{i=1,\ldots,r} \abs{\beta_i} > 1 $, for large enough $ n $ we have
	\begin{equation}
		\label{eq:boundlrscase}
		\abs{G_n} \geq \left( \max_{i=1,\ldots,r} \abs{\beta_i} \right)^{n(1-\varepsilon)}
	\end{equation}
	for $ \varepsilon > 0 $.
	An analogous result holds true if $ G_n $ takes values in a function field in one variable over $ \CC $ as is shown in \cite{fuchs-heintze-p3}.
	
	There is a natural generalization of linear recurrence sequences.
	If we allow more than one parameter, we can generalize \eqref{eq:lrs} to
	\begin{equation*}
		G(n_1,\ldots,n_s) = \sum_{i=1}^{k} f_i(n_1,\ldots,n_s) \alpha_{i1}^{n_1} \cdots \alpha_{is}^{n_s}
	\end{equation*}
	where $ s $ and $ k $ are positive integers, $ f_1,\ldots,f_k $ are polynomials in $ s $ variables and $ n_1,\ldots,n_s $ are non-negative integers.
	Such polynomial-exponential functions $ G : \NN_0^s \to K $ are called multi-recurrences, where we have denoted the set of non-negative integers by $ \NN_0 $.
	We say that $ G $ is defined over a field $ K $ if the coefficients and the bases $ \alpha_{i1}, \ldots, \alpha_{is} $ for $ i = 1,\ldots,k $ are in $ K $. If $ G $ is defined over $ K $, then it takes values in $ K $.
	For more information about recurrence sequences we refer to \cite{schmidt-2003}.
	Van der Poorten and Schlickewei claimed in \cite{vanderpoorten-schlickewei-1982} a similar bound as \eqref{eq:boundlrscase} for multi-recurrences defined over number fields.
	The purpose of the present paper is to provide a proof for that bound.
	We will do this in the same way as in \cite{fuchs-heintze-p3} for the case of linear recurrence sequences and use the same auxiliary result due to Evertse \cite{evertse-1984}, which is cited as Theorem \ref{thm:valuationprodineq} below.
	
	\section{Notation and result}
	
	In the sequel we shall use the abbreviation
	\begin{equation*}
		f_i(\nbf) \bsa_i^{\nbf}
	\end{equation*}
	for the expression
	\begin{equation*}
		f_i(n_1,\ldots,n_s) \alpha_{i1}^{n_1} \cdots \alpha_{is}^{n_s},
	\end{equation*}
	i.e. we will indicate by boldface letters that the considered object is a vector in difference to a single number.
	Moreover, for a vector $ \nbf \in \ZZ^s $ we consider its norm
	\begin{equation*}
		\abs{\nbf} = \abs{n_1} + \cdots + \abs{n_s}.
	\end{equation*}
	In what follows we are interested in multi-recurrences $ G $ as defined above.
	Our main result is the following theorem:
	
	\begin{mythm}
		\label{thm:multibound}
		Let $ K $ be a number field and $ s $ a positive integer.
		Consider the polynomial-exponential function
		\begin{equation*}
			G(\nbf) = \sum_{i=1}^{k} f_i(\nbf) \bsa_i^{\nbf}
		\end{equation*}
		with non-zero algebraic integers $ \alpha_{ij} \in K $, for $ i=1,\ldots,k $ and $ j=1,\ldots,s $, and polynomials $ f(X_1,\ldots,X_s) \in K[X_1,\ldots,X_s] $.
		Fix $ \varepsilon > 0 $.
		Assume that there is an index $ i_0 $, $ 1 \leq i_0 \leq k $, such that there is no subset $ I \subseteq \set{1,\ldots,k} $ with $ i_0 \in I $ and
		\begin{equation*}
			\sum_{i \in I} f_i(\nbf) \bsa_i^{\nbf} = 0.
		\end{equation*}
		Then, for $ \abs{\nbf} $ large enough we have
		\begin{equation*}
			\abs{G(\nbf)} \geq \abs{f_{i_0}(\nbf) \bsa_{i_0}^{\nbf}} e^{-\varepsilon \abs{\nbf}}.
		\end{equation*}
	\end{mythm}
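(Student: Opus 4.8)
The plan is to argue by contradiction, using the Evertse inequality of Theorem~\ref{thm:valuationprodineq} in exactly the way \cite{fuchs-heintze-p3} uses it for ordinary linear recurrences. First I would fix the archimedean place $v_0$ of $K$ attached to the embedding through which $\abs{\cdot}$ is computed, and form the finite set $S$ of places of $K$ consisting of all archimedean places, all finite $v$ with $\abs{\alpha_{ij}}_v\neq 1$ for some $i,j$, and all finite places dividing a denominator of a coefficient of some $f_i$. (This is a finite set because the $\alpha_{ij}$ are non-zero algebraic integers.) Then each $\bsa_i^{\nbf}$ is an $S$-unit, while $f_i(\nbf)$, $f_i(\nbf)\bsa_i^{\nbf}$ and $G(\nbf)$ are $S$-integers. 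Setting $c_1:=\sum_{i,j}\log H(\alpha_{ij})$ we have $H(\bsa_i^{\nbf})\leq e^{c_1\abs{\nbf}}$, and since each $f_i$ is a fixed polynomial evaluated at $\nbf\in\NN_0^s$ we have $\log H(f_i(\nbf))=O(\log\abs{\nbf})=o(\abs{\nbf})$; hence $\prod_{v\in S}\max_i\abs{f_i(\nbf)\bsa_i^{\nbf}}_v\leq e^{c_1\abs{\nbf}+o(\abs{\nbf})}$, the point being that all contributions of the polynomial factors are $e^{o(\abs{\nbf})}$ and will be absorbed.

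Suppose the assertion fails. Then there are $\nbf$ with $\abs{\nbf}$ arbitrarily large at which the hypothesis holds but $\abs{G(\nbf)}<\abs{f_{i_0}(\nbf)\bsa_{i_0}^{\nbf}}e^{-\varepsilon\abs{\nbf}}$; fix such an $\nbf$ (so in particular $f_{i_0}(\nbf)\neq0$). In the vanishing relation $G(\nbf)-\sum_{i}f_i(\nbf)\bsa_i^{\nbf}=0$ discard the summands with $f_i(\nbf)=0$, and among the vanishing subrelations of what remains that contain the summand $f_{i_0}(\nbf)\bsa_{i_0}^{\nbf}$ choose a minimal one. The hypothesis at $\nbf$ forbids any vanishing subrelation that contains $f_{i_0}(\nbf)\bsa_{i_0}^{\nbf}$ but not $G(\nbf)$, so the chosen minimal subrelation must contain $G(\nbf)$; that is, there is a set $I\subseteq\{1,\ldots,k\}$ with $i_0\in I$ and $f_i(\nbf)\neq0$ for all $i\in I$ such that $G(\nbf)=\sum_{i\in I}f_i(\nbf)\bsa_i^{\nbf}$, with $G(\nbf)\neq 0$ and with no vanishing proper non-empty subsum on the right. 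If $\abs{I}=1$ this is $G(\nbf)=f_{i_0}(\nbf)\bsa_{i_0}^{\nbf}$, contradicting the choice of $\nbf$; so $\abs{I}\geq 2$. I expect this extraction of a genuinely non-degenerate subrelation — and the observation that the hypothesis is exactly what puts $G(\nbf)$ inside it, so that below we compare $G(\nbf)$ with it rather than $0$ with it — to be the only delicate point; everything else is height bookkeeping and a choice of auxiliary parameter.

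Next I would apply Theorem~\ref{thm:valuationprodineq} to the non-zero $S$-integers $f_i(\nbf)\bsa_i^{\nbf}$, $i\in I$ (their sum $G(\nbf)$ is non-zero and no proper subsum vanishes), with auxiliary parameter $\varepsilon':=\varepsilon/\bigl(2(c_1+1)\bigr)$. The theorem bounds $\prod_{v\in S}\abs{\sum_{i\in I}f_i(\nbf)\bsa_i^{\nbf}}_v\big/\max_{i\in I}\abs{f_i(\nbf)\bsa_i^{\nbf}}_v$ from below by a positive constant times $B^{-\varepsilon'}$, where $B=\prod_{v\in S}\max_{i\in I}\abs{f_i(\nbf)\bsa_i^{\nbf}}_v\leq e^{c_1\abs{\nbf}+o(\abs{\nbf})}$ by the first paragraph; so this lower bound is at least $e^{-\varepsilon' c_1\abs{\nbf}-o(\abs{\nbf})}\geq e^{-(\varepsilon/2)\abs{\nbf}-o(\abs{\nbf})}$. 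On the other hand, the quantity so bounded equals $\prod_{v\in S}\bigl(\abs{G(\nbf)}_v/\max_{i\in I}\abs{f_i(\nbf)\bsa_i^{\nbf}}_v\bigr)$; its factor at $v_0$ is at most $\abs{G(\nbf)}_{v_0}/\abs{f_{i_0}(\nbf)\bsa_{i_0}^{\nbf}}_{v_0}<e^{-\varepsilon\abs{\nbf}}$ by the choice of $\nbf$, while each remaining factor is at most $\abs{I}$ for archimedean $v$ (triangle inequality) and at most $1$ for non-archimedean $v$ (ultrametric inequality), so the whole quantity is $\leq c_2\,e^{-\varepsilon\abs{\nbf}}$ for a constant $c_2$ depending only on $K$ and $k$. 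Combining the two estimates gives $c_2\,e^{-\varepsilon\abs{\nbf}}>e^{-(\varepsilon/2)\abs{\nbf}-o(\abs{\nbf})}$, i.e. $(\varepsilon/2)\abs{\nbf}<o(\abs{\nbf})+\log c_2$, which is false once $\abs{\nbf}$ is large. This contradiction proves the theorem.
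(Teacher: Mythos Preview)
Your proof is correct and follows the same strategy as the paper's: isolate a subsum containing the $i_0$-term with no vanishing proper subsum, apply Evertse's inequality (Theorem~\ref{thm:valuationprodineq}), and absorb all polynomial-size contributions into $e^{o(\abs{\nbf})}$. The differences are cosmetic---you argue by contradiction and take $T=S$ in Evertse's theorem (bounding the factors at $v\neq v_0$ via the triangle/ultrametric inequality), whereas the paper argues directly with $T=\{\mu\}$; one small imprecision is that the literal output of Theorem~\ref{thm:valuationprodineq} involves $\norm{\xbf}^{-\varepsilon'}$ together with the factor $\prod_{i}\prod_{\nu\in S}\abs{x_i}_\nu$ (which the paper controls via Lemma~\ref{lem:boundprod}) rather than your $B^{-\varepsilon'}$, but since both produce a lower bound of the shape $e^{-O(\varepsilon')\abs{\nbf}-o(\abs{\nbf})}$ your argument goes through unchanged.
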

	
	\begin{myrem}
		The condition concerning $ i_0 $ in the above theorem is really necessary and already stated in \cite{vanderpoorten-schlickewei-1982}. Indeed, the size of $ G(\nbf) $ cannot be bounded by a term from a vanishing subsum.
	\end{myrem}
	
	\begin{myrem}
		\label{rem:generalized}
		We emphasize that the same statement as in Theorem \ref{thm:multibound} holds with the completely analogous proof also for any other valution $ \abs{\cdot}_{\mu} $ on $ K $ in the proven lower bound instead of the standard absolute value.
	\end{myrem}
	
	\begin{myrem}
		The bound in Theorem \ref{thm:multibound} holds for all $ \nbf $ with $ \abs{\nbf} \geq B $.
		Unfortunately this lower bound $ B $ cannot be given explicitely since it depends, among others, on the ineffective constant given by Theorem \ref{thm:valuationprodineq} below. More precisely, it is influenced by a threshold where the exponential function becomes larger than a polynomial function having ineffective coefficients.
	\end{myrem}
	
	We are only able to prove the result for number fields.
	To the knowledge of the authors it is still open to find and prove an analogous result in the function field case, i.e. a version of Theorem 2.1 in \cite{fuchs-heintze-p3} for multi-recurrences.
	We leave this as an open question.
	
	\section{Preliminaries}
	
	In our proof we will need the following result of Evertse. The reader will find it as Theorem 2 in \cite{evertse-1984}. We use the notation
	\begin{equation*}
		\norm{\xbf} := \max_{\substack{k=0,\ldots,t \\ i=1,\ldots,D}} \abs{\sigma_i(x_k)}
	\end{equation*}
	with $ \set{\sigma_1, \ldots, \sigma_D} $ the set of all embedings of $ K $ in $ \CC $ and $ \xbf = (x_0,x_1,\ldots,x_t) $.
	Moreover, we denote by $ \Oc_K $ the ring of integers in the number field $ K $ and by $ M_K $ the set of places of the number field $ K $:
	\begin{mythm}
		\label{thm:valuationprodineq}
		Let $ t $ be a non-negative integer and $ S $ a finite set of places in $ K $, containing all infinite places.
		Then for every $ \varepsilon > 0 $ a constant $ C $ exists, depending only on $ \varepsilon, S, K, t $ such that for each non-empty subset $ T $ of $ S $ and every vector $ \xbf = (x_0,x_1,\ldots,x_t) \in \Oc_K^{t+1} $ with
		\begin{equation*}
			x_{i_0} + x_{i_1} + \cdots + x_{i_s} \neq 0
		\end{equation*}
		for each non-empty subset $ \set{i_0,i_1,\ldots,i_s} $ of $ \set{0,1,\ldots,t} $ the inequality
		\begin{equation*}
			\left( \prod_{k=0}^{t} \prod_{\nu \in S} \abs{x_k}_{\nu} \right) \prod_{\nu \in T} \abs{x_0 + x_1 + \cdots + x_t}_{\nu} \geq C \left( \prod_{\nu \in T} \max_{k=0,\ldots,t} \abs{x_k}_{\nu} \right) \norm{\xbf}^{-\varepsilon}.
		\end{equation*}
		is valid.
	\end{mythm}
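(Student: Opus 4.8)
The assertion is Evertse's strengthening of the Subspace Theorem, and the natural plan is to deduce it from the $ p $-adic Subspace Theorem of Schmidt and Schlickewei by induction on $ t $. Fix $ \varepsilon > 0 $, write $ E(\xbf) $ for the left-hand side of the claimed inequality and put $ R(\xbf) = \prod_{\nu\in T}\max_{k}\abs{x_k}_\nu $. Since the singleton case of the hypothesis forces every $ x_k $ to be non-zero, all quantities appearing are positive, and it suffices to prove that there are only finitely many valid $ \xbf $ with $ E(\xbf) < R(\xbf)\norm{\xbf}^{-\varepsilon} $: the admissible constant is then $ C = \min\set{1,\ \min E(\xbf)/(R(\xbf)\norm{\xbf}^{-\varepsilon})} $, the inner minimum ranging over that finite exceptional set.

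The base case $ t = 0 $ is elementary. Here $ R(\xbf) $ cancels and the inequality reduces to $ \prod_{\nu\in S}\abs{x_0}_\nu \geq C\norm{\xbf}^{-\varepsilon} $; since $ x_0 \in \Oc_K $ is non-zero the product formula gives $ \prod_{\nu\in S}\abs{x_0}_\nu = \prod_{\nu\notin S}\abs{x_0}_\nu^{-1} \geq 1 $, while $ \norm{\xbf} \geq 1 $ forces $ \norm{\xbf}^{-\varepsilon} \leq 1 $, so $ C = 1 $ works. For the inductive step I fix, for each $ \nu \in S $, a system of $ t+1 $ linearly independent linear forms in $ X_0,\ldots,X_t $: at the places $ \nu \in T $ I take $ X_0 + \cdots + X_t,\, X_1,\ldots,X_t $, and at the places $ \nu \in S\setminus T $ the coordinate forms $ X_0,\ldots,X_t $. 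Using the product formula to rewrite the coordinate product $ \prod_{k}\prod_{\nu\in S}\abs{x_k}_\nu $ through contributions outside $ S $ and through the height, and comparing $ \norm{\xbf} $ with the absolute height $ H(\xbf) $, a failure $ E(\xbf) < R(\xbf)\norm{\xbf}^{-\varepsilon} $ is recast as the Subspace Theorem hypothesis
\[ \prod_{\nu\in S}\prod_{i=0}^{t}\frac{\abs{L_i^{(\nu)}(\xbf)}_\nu}{\max_{j}\abs{x_j}_\nu} < H(\xbf)^{-\varepsilon'} \]
for a suitable $ \varepsilon' > 0 $.

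The Subspace Theorem then confines all such $ \xbf $ to a finite union of proper subspaces of $ K^{t+1} $. On any one of them a coordinate is a fixed $ K $-linear combination of the others; substituting this relation turns $ x_0 + \cdots + x_t $ into a linear form in the remaining $ t $ coordinates and collapses the configuration to one with strictly fewer summands. The hypothesis that \emph{no} subsum of $ \xbf $ vanishes is precisely what ensures that this reduced configuration is again non-degenerate, so that the inductive hypothesis in dimension $ t-1 $ applies and shows that all but finitely many of the $ \xbf $ lying in the subspace already satisfy the asserted inequality. Running over the finitely many subspaces leaves only finitely many violating $ \xbf $, which is exactly what was needed.

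The main obstacle is the bookkeeping concealed in this last step. After restriction to a subspace the coordinate forms become general linear forms with coefficients in $ K $, so I must either carry out the induction for this more flexible class (values of linear forms, equivalently $ S $-units with coefficients) or renormalise the variables, and I must check that the distinguished set $ T $, the ambient set $ S $, and above all the non-vanishing-of-subsums property are inherited correctly, and that the loss in the exponent can be absorbed by choosing $ \varepsilon' $ small enough at the start. A secondary and unavoidable feature is that the Subspace Theorem is ineffective, so the resulting constant $ C $ is ineffective as well, in accordance with the remark on the threshold $ B $ above.
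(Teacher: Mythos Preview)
The paper does not prove this statement at all: Theorem~\ref{thm:valuationprodineq} is quoted verbatim as Theorem~2 of Evertse~\cite{evertse-1984} and used as a black box in the proof of Theorem~\ref{thm:multibound}. So there is no ``paper's own proof'' to compare against; you are attempting to supply what the authors deliberately outsourced.

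That said, your outline is essentially Evertse's own argument: induction on $t$ with the $p$-adic Subspace Theorem furnishing the finitely many proper subspaces at each stage. The base case and the reduction ``finitely many violators $\Rightarrow$ admissible constant'' are fine (and taking a minimum over the finitely many subsets $T\subseteq S$ makes $C$ independent of $T$, which you should say explicitly). The genuine gap is exactly the one you flag in your final paragraph, and it is not mere bookkeeping. When you restrict to a subspace, say $x_t=\sum_{i<t}c_ix_i$, the left-hand side still contains the factor $\prod_{\nu\in S}\abs{x_t}_\nu=\prod_{\nu\in S}\bigl|\sum_i c_ix_i\bigr|_\nu$, the full sum becomes $\sum_i(1+c_i)x_i$, and the $\max$ on the right still ranges over $x_t$ as well. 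None of this matches the shape of the theorem in dimension $t-1$, so the inductive hypothesis as stated does not apply. Evertse handles this by proving from the outset a statement about values of fixed linear forms (his Theorem~3 and the surrounding machinery in \cite{evertse-1984}), which is stable under passing to a subspace; your parenthetical ``carry out the induction for this more flexible class'' is the right instinct, but until that stronger statement is formulated and the non-vanishing-of-subsums condition is shown to be inherited by the new forms, the induction does not close. As written, the proposal is a correct high-level plan with the decisive step left open.
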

	
	Moreover, the next lemma is used in the proof of our theorem.
	It is an analogous version of Lemma 1 in \cite{evertse-1984} for vectors.
	\begin{mylemma}
		\label{lem:boundprod}
		Let $ K $ be a number field of degree $ D $, let $ f(X_1,\ldots,X_s) \in K[X_1,\ldots,X_s] $ be a polynomial of absolute degree $ m $ and let $ T $ be a non-empty set of places in $ K $.
		Then there exists a positive constant $ c $, depending only on $ K,f $, such that for all $ \nbf \in \ZZ^s $ with $ \nbf \neq 0 $ and $ f(\nbf) \neq 0 $ it holds that
		\begin{equation*}
			\prod_{\nu \in T} \abs{f(\nbf)}_{\nu} \leq \prod_{\nu \in M_K} \max \setb{1, \abs{f(\nbf)}_{\nu}} \leq c \abs{\nbf}^{Dm}.
		\end{equation*}
	\end{mylemma}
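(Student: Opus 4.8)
The first inequality is immediate: for each $\nu \in T$ we have $\abs{f(\nbf)}_\nu \le \max(1,\abs{f(\nbf)}_\nu)$, and extending the product from $T$ to all of $M_K$ only inserts further factors that are $\ge 1$. So the whole content of the lemma is the second inequality, a height estimate for the single algebraic number $f(\nbf) \in K$.

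The plan is to bound everything in terms of the (multiplicative) Weil height, or more concretely to track the archimedean and non-archimedean contributions separately. Write $f = \sum_{\mathbf{e}} a_{\mathbf{e}} X_1^{e_1}\cdots X_s^{e_s}$, a sum of at most $N$ monomials (with $N$ depending only on $f$), each of total degree $\le m$. For a non-archimedean place $\nu$, the ultrametric inequality gives $\abs{f(\nbf)}_\nu \le \max_{\mathbf{e}} \abs{a_{\mathbf{e}}}_\nu \cdot \abs{\nbf}^{?}$... actually more carefully $\abs{n_j^{e_j}}_\nu \le 1$ when $\nu$ is non-archimedean and $n_j \in \ZZ$, so $\abs{f(\nbf)}_\nu \le \max_{\mathbf{e}}\abs{a_{\mathbf{e}}}_\nu$. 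Hence $\prod_{\nu \nmid \infty} \max(1,\abs{f(\nbf)}_\nu) \le \prod_{\nu\nmid\infty}\max_{\mathbf{e}}\max(1,\abs{a_{\mathbf{e}}}_\nu)$, a constant depending only on $f$ and $K$. For an archimedean place $\nu$ corresponding to an embedding $\sigma$, with local degree $d_\nu$, we have $\abs{f(\nbf)}_\nu = \abs{\sigma(f(\nbf))}^{d_\nu/D}$ (with the normalization making the product formula hold), and $\abs{\sigma(f(\nbf))} \le \sum_{\mathbf{e}} \abs{\sigma(a_{\mathbf{e}})}\cdot \abs{n_1}^{e_1}\cdots\abs{n_s}^{e_s} \le \big(\sum_{\mathbf{e}}\abs{\sigma(a_{\mathbf{e}})}\big)\cdot \abs{\nbf}^{m}$, using $\abs{n_1}^{e_1}\cdots\abs{n_s}^{e_s} \le (\abs{n_1}+\cdots+\abs{n_s})^{e_1+\cdots+e_s} \le \abs{\nbf}^{m}$ (here I use $\abs{\nbf}\ge 1$ since $\nbf \ne 0$ is an integer vector). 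Multiplying over the archimedean places and using $\sum_{\nu\mid\infty} d_\nu = D$ gives a factor $\le \big(\max_\sigma \sum_{\mathbf{e}}\abs{\sigma(a_{\mathbf{e}})}\big)^{1}\cdot \abs{\nbf}^{m}$ raised appropriately — tracking the exponent, the product of $\abs{\nbf}^{m \cdot d_\nu/D}$ over archimedean $\nu$ is exactly $\abs{\nbf}^{m}$, but if one uses the unnormalized product (as in Evertse's $\norm{\cdot}$ convention) one gets $\abs{\nbf}^{Dm}$. Combining the two regimes yields $\prod_{\nu\in M_K}\max(1,\abs{f(\nbf)}_\nu) \le c\,\abs{\nbf}^{Dm}$ with $c$ absorbing all the coefficient-dependent constants.

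The only real subtlety is bookkeeping with the normalization of absolute values: one must be consistent about whether $\prod_\nu \abs{\cdot}_\nu$ refers to the normalized absolute values (for which the product formula reads $\prod_\nu\abs{x}_\nu = 1$) or to the extensions to all embeddings into $\CC$ used in the definition of $\norm{\xbf}$, since the discrepancy is precisely what turns an exponent $m$ into $Dm$. I would fix the convention at the start to match the one under which Theorem \ref{thm:valuationprodineq} is quoted, then the exponent $Dm$ falls out. Everything else is the triangle inequality in the archimedean case and the ultrametric inequality plus integrality of $\nbf$ in the non-archimedean case, so I expect no genuine obstacle — this is the multivariate analogue of the standard fact that $h(f(\nbf)) \le m\,h(\nbf) + O_f(1) = m\log\abs{\nbf} + O_f(1)$, exponentiated.
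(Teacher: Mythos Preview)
Your proposal is correct and follows essentially the same approach as the paper: split the product over $M_K$ into archimedean and non-archimedean contributions, bound the non-archimedean part by a constant depending only on the coefficients of $f$ (using that $\abs{n_j}_\nu \le 1$ for $\nu$ finite and $n_j\in\ZZ$), bound each archimedean factor by $c_1\abs{\nbf}^m$ via the triangle inequality, and collect the at most $D$ archimedean factors to obtain the exponent $Dm$. Your remarks on normalization are on point; the paper simply works with the unnormalized absolute values throughout, which is why the $D$ infinite places contribute $\abs{\nbf}^m$ each.
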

	
	\begin{proof}
		Obviously we have $ \abs{f(\nbf)}_{\nu} \leq \max \setb{1, \abs{f(\nbf)}_{\nu}} $.
		Thus the first inequality
		\begin{equation*}
			\prod_{\nu \in T} \abs{f(\nbf)}_{\nu} \leq \prod_{\nu \in M_K} \max \setb{1, \abs{f(\nbf)}_{\nu}}
		\end{equation*}
		is trivial.
		Note that for each $ \nbf $ there are only finitely many places $ \nu $ with $ \abs{f(\nbf)}_{\nu} \neq 1 $ and therefore the products are finite.
		
		There are at most $ D $ infinite places. Hence there is a positive constant $ c_1 $ such that
		\begin{equation*}
			\abs{f(\nbf)}_{\nu} \leq c_1 \abs{\nbf}^m
		\end{equation*}
		holds for all infinite places $ \nu $.
		Moreover, for all but finitely many finite places $ \nu $ we have
		\begin{equation*}
			\abs{f(\nbf)}_{\nu} \leq 1.
		\end{equation*}
		These finitely many places depend only on (the denominators of) the coefficients of $ f $ and are independent of $ \nbf $.
		For the finitely many remaining finite places $ \nu $ there is a positive constant $ c_2 $, independent of $ \nbf $, such that
		\begin{equation*}
			\abs{f(\nbf)}_{\nu} \leq c_2.
		\end{equation*}
		We may assume that $ c_1 > 1 $ and $ c_2 > 1 $.
		Putting things together we get
		\begin{equation*}
			\prod_{\nu \in M_K} \max \setb{1, \abs{f(\nbf)}_{\nu}} \leq c \abs{\nbf}^{Dm}
		\end{equation*}
		for a new constant $ c $.
	\end{proof}
	
	\section{Proof of Theorem \ref{thm:multibound}}
	
	Before we start with proving Theorem \ref{thm:multibound} let us mention that the proof follows the same strategy and is the multi-recurrence version of the proof of the corresponding result for linear recurrences given in the appendix of \cite{fuchs-heintze-p3} by the authors.
	
	\begin{proof}[Proof of Theorem \ref{thm:multibound}]
		Since the bases $ \alpha_{ij} $ of the exponential parts of $ G $ are algebraic integers, we can find a non-zero integer $ z $ such that $ z f_i(\nbf) \bsa_i^{\nbf} $ are algebraic integers for all $ i=1,\ldots,k $ and all non-negative integers $ n_1,\ldots,n_s $.
		Choose $ S $ as a finite set of places in $ K $ containing all infinite places as well as all places such that $ \alpha_{ij} $ for $ i=1,\ldots,k $ and $ j=1,\ldots,s $ are $ S $-units.
		Let $ \mu $ be such that $ \abs{\cdot}_{\mu} = \abs{\cdot} $ is the usual absolute value on $ \CC $. In particular we have $ \mu \in S $. Further define $ T = \set{\mu} $.
		
		We may assume that for the index $ i_0 $ from the theorem we have $ i_0 = 1 $ to simplify the notation.
		By renumbering summands we can assume that
		\begin{equation*}
			G(\nbf) = \sum_{i=1}^{\ell} f_i(\nbf) \bsa_i^{\nbf}
		\end{equation*}
		for an integer $ \ell $ with $ 1 \leq \ell \leq k $ has no vanishing subsum.
		Indeed, there are only finitely many possible subsums with this property and we can perform the following steps for each of them analogously. At the end of the proof we can put the cases together by choosing the largest occuring bound for $ \abs{\nbf} $.
		
		Thus we can apply Theorem \ref{thm:valuationprodineq} and get
		\begin{equation*}
			\left( \prod_{i=1}^{\ell} \prod_{\nu \in S} \abs{z f_i(\nbf) \bsa_i^{\nbf}}_{\nu} \right) \abs{zG(\nbf)} \geq C \max_{i=1,\ldots,\ell} \abs{z f_i(\nbf) \bsa_i^{\nbf}} \norm{z\xbf}^{-\varepsilon'}
		\end{equation*}
		for $ \xbf = \left( f_1(\nbf) \bsa_1^{\nbf}, \ldots, f_{\ell}(\nbf) \bsa_{\ell}^{\nbf} \right) $ and an $ \varepsilon' $ to be fixed later.
		Using that $ z $ is a fixed integer and that the $ \alpha_{ij} $ are $ S $-units, we get
		\begin{equation}
			\label{eq:firstsimplified}
			\left( \prod_{i=1}^{\ell} \prod_{\nu \in S} \abs{f_i(\nbf)}_{\nu} \right) \abs{G(\nbf)} \geq C_1 \abs{f_1(\nbf) \bsa_1^{\nbf}} \norm{\xbf}^{-\varepsilon'}.
		\end{equation}
		
		Let $ m $ denote the maximum of the absolute degrees of the polynomials $ f_1,\ldots,f_k $.
		Then there exists a constant $ A > 1 $, which is independent of $ \nbf $ and $ \varepsilon' $ (and fits for all of the finitely many cases mentioned in the second paragraph of this proof), satisfying
		\begin{align*}
			\norm{\xbf} &= \max_{\substack{i=1,\ldots,\ell \\ t=1,\ldots,D}} \abs{\sigma_t \left( f_i(\nbf) \bsa_i^{\nbf} \right)} \\
			&\leq \max_{\substack{i=1,\ldots,\ell \\ t=1,\ldots,D}} \abs{\sigma_t \left( f_i(\nbf) \right)} \cdot \max_{\substack{i=1,\ldots,\ell \\ t=1,\ldots,D}} \abs{\sigma_t \left( \bsa_i^{\nbf} \right)} \\
			&\leq C_2 \abs{\nbf}^m \prod_{j=1}^{s} \max_{\substack{i=1,\ldots,\ell \\ t=1,\ldots,D}} \abs{\sigma_t \left( \alpha_{ij}^{n_j} \right)} \\
			&\leq C_2 \abs{\nbf}^m A^{\abs{\nbf}}.
		\end{align*}
		Inserting this upper bound into inequality \eqref{eq:firstsimplified} yields
		\begin{equation}
			\label{eq:secondsimplified}
			\left( \prod_{i=1}^{\ell} \prod_{\nu \in S} \abs{f_i(\nbf)}_{\nu} \right) \abs{G(\nbf)} \geq \abs{f_1(\nbf) \bsa_1^{\nbf}} C_3 \abs{\nbf}^{-m\varepsilon'} A^{-\abs{\nbf} \varepsilon'}.
		\end{equation}
		
		Now we apply Lemma \ref{lem:boundprod} to the double product in the last displayed inequality.
		This gives us
		\begin{equation*}
			\prod_{i=1}^{\ell} \prod_{\nu \in S} \abs{f_i(\nbf)}_{\nu} \leq \prod_{i=1}^{\ell} C_4^{(i)} \abs{\nbf}^{Dm} \leq C_5 \abs{\nbf}^{Dm\ell}
		\end{equation*}
		and together with inequality \eqref{eq:secondsimplified} the lower bound
		\begin{align*}
			\abs{G(\nbf)} &\geq \abs{f_1(\nbf) \bsa_1^{\nbf}} C_6 \abs{\nbf}^{-Dm\ell -m\varepsilon'} A^{-\abs{\nbf} \varepsilon'} \\
			&\geq \abs{f_1(\nbf) \bsa_1^{\nbf}} A^{-2\varepsilon' \abs{\nbf}}
		\end{align*}
		where the last inequality holds for $ \abs{\nbf} $ large enough.
		Thus, choosing $ \varepsilon' $ such that $ 2\varepsilon' \log(A) = \varepsilon $, we end up with
		\begin{equation*}
			\abs{G(\nbf)} \geq \abs{f_1(\nbf) \bsa_1^{\nbf}} e^{-2\varepsilon' \log(A) \abs{\nbf}}
			= \abs{f_1(\nbf) \bsa_1^{\nbf}} e^{-\varepsilon \abs{\nbf}}
		\end{equation*}
		and the theorem is proven.
	\end{proof}

\end{document}